\newtheorem{theorem}{Theorem}[section]
\newtheorem{lemma}[theorem]{Lemma}
\newtheorem{proposition}[theorem]{Proposition}
\newtheorem{corollary}[theorem]{Corollary}
\theoremstyle{definition}
\newtheorem{definition}[theorem]{Definition}
\theoremstyle{remark}
\newtheorem{remark}[theorem]{Remark}
\newtheorem*{ack*}{Acknowledgement}
\numberwithin{equation}{section}
\begin{document}

\title[On $C^*$-algebras related to constrained representations]{On $C^*$-algebras related to constrained representations of a free
group}

%    Information for first author
\author{V. M. Manuilov}
%    Address of record for the research reported here
\address{V. M. Manuilov, Department of Mechanics and Mathematics, Moscow State University, Leninskie Gory, Moscow, 119991, Russia, and Academy of Fundamental and Interdisciplinary Science, Harbin Institute of Technology,
Harbin, 150001, P.R.China}
%    Current address
%\curraddr{}
\email{manuilov@mech.math.msu.su}
%    \thanks will become a 1st page footnote.
\thanks{The first named author acknowledges partial support from RFFI, grant No. 08-01-00034}

%    Information for second author
\author{Chao You}
\address{Chao You, Department of Mathematics and Academy of Fundamental and Interdisciplinary Science, Harbin Institute of Technology, Harbin, 150001, P.R.China}
\email{hityou1982@gmail.com}
\thanks{The second named author is partially supported by National Natural Science Foundation of China (No.10971150) and State Scholarship Fund of China (No.2008612056).}

%    General info
\subjclass[2000]{Primary 46L05; Secondary 47L55, 46L80, 20E05}

%\date{}

%\dedicatory{}

\keywords{free group, constrained representation, continuous field, group $C^*$-algebra, $K$-theory}

\begin{abstract}
We consider representations of the free group $F_2$ on two
generators such that the norm of the sum of the generators and
their inverses is bounded by $\mu\in[0,4]$. These
$\mu$-constrained representations determine a $C^*$-algebra
$A_{\mu}$ for each $\mu\in[0,4]$. When $\mu=4$ this is the full
group $C^*$-algebra of $F_2$. We prove that these $C^*$-algebras
form a continuous bundle of $C^*$-algebras over $[0,4]$ and
calculate their $K$-groups.
\end{abstract}

\maketitle

\section{Introduction}

The aim of this paper is to study certain family of $C^*$-algebras
related to representations of a free group with a given bound for
the norm of the sum of the generating elements.

Let $\Gamma$ be a discrete group. If we consider different sets of
unitary representations of $\Gamma$ (all representations in this
paper are unitary ones), they lead to different group
$C^*$-algebras of $\Gamma$. For example, the full group
$C^*$-algebra of $\Gamma$, denoted by $C^*(\Gamma)$, is the
closure of the group ring $\mathds{C}[\Gamma]$ with respect to the
norm induced by the universal representation (or, equivalently, by
all representations); while the reduced group $C^*$-algebra of
$\Gamma$, denoted by $C^*_r(\Gamma)$, is the closure of
$\mathds{C}[\Gamma]$ with respect to the norm induced by the
regular representation. Here we consider some special classes of
representations for the free group on two generators in order to
obtain the corresponding $C^*$-algebras. These classes are related
to the special element $x$ of the group ring --- the sum of all
generators and their inverses, sometimes called an {\it averaging
operator}. This element plays an important role in research
related to groups and their $C^*$-algebras. For example,
amenability of $\Gamma$ is equivalent to $\|\lambda(x)\|=n$, where
$\lambda$ is the regular representation of $\Gamma$ (we use the
same notation for representations of groups and of their group
rings and $C^*$-algebras) and $n$ is the number of summands in $x$
(twice the number of generators). Property (T) for $\Gamma$ is
equivalent to existence of a spectral gap near $n$ in the spectrum
of $\pi(x)$ for the universal representation $\pi$
\cite{delaHarpe}.

Let $u$ and $v$ denote the two generators of the free group $F_2$.
Then $x=u+u^{-1}+v+v^{-1}\in \mathds{C}[\Gamma]$.

\begin{definition}
For $\mu\in[0,4]$, a representation $\pi:F_2\rightarrow
\mathcal{U}(\mathcal{H}_{\pi})$ is called a
\emph{$\mu$-constrained representation} if
\begin{equation}
\|\pi(x)\|=\|\pi(u)+\pi(u)^*+\pi(v)+\pi(v)^*\|\leq \mu.
\end{equation}
\end{definition}

Given $0\leq\mu\leq4$, the assignment $u,v\mapsto (\mu/4\pm
i\sqrt{1-(\mu/4)^2})I$, where $I$ is the identity operator on any
(in particular, one-dimensional) Hilbert space, gives rise to a
$\mu$-constrained representation of $F_2$ for any $\mu\in[0,4]$.
This shows that $\mu$-constrained representations exist. Actually
there are abundant. For example, all representations of $F_2$ are
$4$-constrained ones, in which case there is actually no
constraint at all. Moreover, if $\pi$ is a $\mu$-constrained
representation and $\mu \leq \mu' \leq4$, then $\pi$ is also a
$\mu'$-constrained one. Let $\Pi_{\mu}$ denote the set of all
$\mu$-constrained representations, then $\Pi_{\mu_1}\subseteq
\Pi_{\mu_2}$ if $0\leq \mu_1 \leq \mu_2\leq 4$. The
one-dimensional example above shows also that if $\mu_1\neq\mu_2$
then $\Pi_{\mu_1}$ is strictly smaller than $\Pi_{\mu_2}$. Note
that $\Pi_4$ consists of all representations of $F_2$.

As in the case of $C^*(F_2)$, we first define a (semi)norm
$\|\cdot\|_{\mu}$ over $\mathds{C}[F_2]$ induced by $\Pi_{\mu}$
and then complete $\mathds{C}[F_2]$ with respect to
$\|\cdot\|_{\mu}$, thus obtaining the corresponding group
$C^*$-algebra $A_{\mu}$.

\begin{definition}
For $a\in \mathds{C}[F_2]$, $\mu\in[0,4]$, set
\begin{equation}
\|a\|_{\mu}:=\sup_{\pi\in\Pi_{\mu}}\|\pi(a)\|.
\end{equation}
\end{definition}

\begin{remark}
Since $\|a\|_\mu\leq\|a\|_4= \|a\|_{\text{max}}$, where $\|\cdot
\|_{\text{max}}$ is the norm on $\mathds{C}[F_2]$ induced by the
universal representation, it is clear that $\|\cdot\|_{\mu}$ is
bounded. Moreover, $\|\cdot\|_{\mu_1}\leq \|\cdot\|_{\mu_2}$ if
$0\leq \mu_1 \leq \mu_2\leq 4$. As we use only unitary
representations, this is a $C^*$-seminorm.

\end{remark}

Set $\mathcal{N}_\mu=\{a\in\mathds{C}[F_2]:\|a\|_\mu=0\}$ and complete
$\mathds{C}[F_2]/\mathcal{N}_\mu$ with respect to $\|\cdot\|_{\mu}$ (which
is already a norm there). Let us denote this completion by
$A_\mu$. It is obviously a $C^*$-algebra for any $\mu\in[0,4]$.
Our aim is to study the family of $C^*$-algebras $A_\mu$.

\begin{remark}
Note that $A_\mu$ can be defined as a {\it universal}
$C^*$-algebra generated by two unitaries, $u$, $v$ satisfying a
single relation $\|u+u^*+v+v^*\|\leq\mu$.

\end{remark}

\begin{proposition}\label{quotient}
For any $0\leq \mu_1 \leq \mu_2\leq 4$, the identity map on
$\mathds{C}[F_2]$ extends to a surjective $*$-homomorphism
$A_{\mu_2}\to A_{\mu_1}$.

\end{proposition}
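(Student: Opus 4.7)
The plan is to leverage the already-noted inclusion $\Pi_{\mu_1} \subseteq \Pi_{\mu_2}$ and the corresponding inequality $\|\cdot\|_{\mu_1} \leq \|\cdot\|_{\mu_2}$ on $\mathbb{C}[F_2]$, which immediately gives the containment of null ideals $\mathcal{N}_{\mu_2} \subseteq \mathcal{N}_{\mu_1}$.

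First I would define the map at the level of group rings modulo nullspaces. The identity on $\mathbb{C}[F_2]$ descends to a $*$-algebra homomorphism
\[
\varphi_0 : \mathbb{C}[F_2]/\mathcal{N}_{\mu_2} \longrightarrow \mathbb{C}[F_2]/\mathcal{N}_{\mu_1},
\qquad a + \mathcal{N}_{\mu_2} \longmapsto a + \mathcal{N}_{\mu_1},
\]
which is well defined precisely because $\mathcal{N}_{\mu_2} \subseteq \mathcal{N}_{\mu_1}$. The inequality $\|a\|_{\mu_1} \leq \|a\|_{\mu_2}$ shows that $\varphi_0$ is contractive for the norms inherited from $A_{\mu_1}$ and $A_{\mu_2}$. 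Since $\mathbb{C}[F_2]/\mathcal{N}_{\mu_2}$ is dense in $A_{\mu_2}$, the map $\varphi_0$ extends uniquely by continuity to a bounded linear map $\varphi : A_{\mu_2} \to A_{\mu_1}$. The $*$-algebra relations hold on the dense subalgebra and therefore pass to the completion, so $\varphi$ is a $*$-homomorphism.

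For surjectivity I would use the general fact that the image of a $*$-homomorphism between $C^*$-algebras is automatically closed. The image of $\varphi$ contains $\mathbb{C}[F_2]/\mathcal{N}_{\mu_1}$, which is dense in $A_{\mu_1}$; combined with closedness, this forces $\varphi$ to be onto.

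I do not expect a real obstacle here: the estimate $\|\cdot\|_{\mu_1}\leq\|\cdot\|_{\mu_2}$ is built into the definition, so the only non-formal ingredient is the closed-range property of $C^*$-homomorphisms, which is standard. The mild point worth being explicit about is that one must first pass to the quotients by $\mathcal{N}_{\mu_i}$ before extending, since the ``identity map on $\mathbb{C}[F_2]$'' of the statement is meaningful only after recognizing that $\mathbb{C}[F_2]$ need not embed in $A_{\mu_i}$ for small $\mu_i$.
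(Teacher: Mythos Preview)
Your argument is correct and is essentially identical to the paper's proof: both use $\mathcal{N}_{\mu_2}\subseteq\mathcal{N}_{\mu_1}$ to descend the identity to the quotients, extend by continuity, and then conclude surjectivity from density of the image together with the closed-range property of $*$-homomorphisms between $C^*$-algebras. Your write-up is slightly more explicit about the contractivity and the closed-range step, but there is no substantive difference.
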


\begin{proof}
Since $\mathcal{N}_{\mu_2}\subset \mathcal{N}_{\mu_1}$, the identity map on
$\mathds{C}[F_2]$ gives rise to a map
$\mathds{C}[F_2]/\mathcal{N}_{\mu_2}\to\mathds{C}[F_2]/\mathcal{N}_{\mu_1}$, which
extends to a $*$-homomorphism from $A_{\mu_2}$ to $A_{\mu_1}$ by
continuity. Since the range of this $*$-homomorphism is dense in
$A_{\mu_1}$, it is surjective.

\end{proof}

Note that $A_4$ is isomorphic to the full group $C^*$-algebra
$C^*(F_2)$. Later on we shall give a description for $A_0$. For
$2\sqrt{3}\leq\mu\leq 4$ the identity map on $\mathds{C}[F_2]$
extends to a surjective $*$-homomorphism from $A_\mu$ to the
reduced group $C^*$-algebra $C^*_r(F_2)$ \cite{Kesten}.

The aim of this paper is to study the family of $C^*$-algebras
$A_\mu$. In the next section we show that this family is a
continuous bundle of $C^*$-algebras and then we identify $A_0$ as
a certain amalgamated free product. Finally, following Cuntz
\cite{Cuntz}, we calculate the $K$-theory groups for $A_\mu$ and
show that they don't depend on $\mu$.

\section{Continuity of $A_\mu$}

If $\mu_1$ is close to $\mu_2$ then one would expect that
$A_{\mu_1}$ and $A_{\mu_2}$ are close to each other. In other
words, there is some kind of ``continuity'' of $A_{\mu}$ with
respect to $\mu$. In order to characterize such ``continuity'', we
use the notion of continuous bundle of $C^*$-algebras due to
Dixmier.

Let $I$ be a locally compact Hausdorff space and let
$\{A(x)\}_{x\in I}$ be a family of $C^*$-algebras. Denote by
$\prod_{x\in I}A(x)$ the set of functions $a=a(x)$ defined on $I$
and such that $a(x)\in A(x)$ for any $x\in I$.

\begin{definition}[\cite{Dixmier}]
Let $\mathcal{A}\subset \prod_{x\in I}A(x)$ be a subset with the
following properties:

\begin{enumerate}[(i)]

\item $\mathcal{A}$ is a $*$-subalgebra in $\prod_{x\in I}A(x)$,

\item for any $x\in I$ the set $\{a(x):a\in \mathcal{A}\}$ is dense in the algebra $A(x)$,

\item for any $a\in \mathcal{A}$ the function $x\mapsto \|a(x)\|$ is continuous,

\item let $a\in \prod_{x\in I}A(x)$, if for any $x\in I$ and for any $\varepsilon>0$ one can find such
$a'\in \mathcal{A}$ such that $\|a(x)-a'(x)\|<\varepsilon$ in some neighborhood of the point $x$, then one has $a\in\mathcal{A}$.
\end{enumerate}
Then the triple $(A(x),I,\mathcal{A})$ is called a continuous
bundle of $C^*$-algebras.
\end{definition}

Let $I=[0,4]$, $A=C(I,C^*(F_2))$ and let $B=\{f\in
A:\|f(\mu)\|_{\mu}=0,\forall \mu\in I\}$. It is clear that $B$ is
a closed ideal of $A$, with the quotient map $q:A\rightarrow A/B$.
Define the map $\iota :A/B\rightarrow \prod_{\mu\in I}A_{\mu}$ by
$\iota(b)(\mu)=q_{\mu}(a(\mu))$, where $b\in A/B$ and $a\in A$
such that $b=q(a)$, $q_{\mu}:C^*(F_2)=A_4\rightarrow A_{\mu}$ is
the quotient map. It is simple to check that $\iota$ is
well-defined and injective, so from now on we treat $A/B$ as a
subalgebra of $\prod_{\mu\in I}A_{\mu}$. In order to prove that
$(A_{\mu},I,A/B)$ is a continuous bundle of $C^*$-algebras, we
need some lemmas.

\begin{lemma}\label{norm continuity}
For any $a\in C^*(F_2)$, the function $N_a:I\rightarrow
\mathds{R}_+$ defined by $\mu\mapsto \|a\|_{\mu}$ is continuous.
\end{lemma}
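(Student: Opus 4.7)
First I would reduce to $a\in\mathds{C}[F_2]$. Since $\|a-b\|_\mu\le\|a-b\|_{C^*(F_2)}$ uniformly in $\mu$, the map $a\mapsto N_a$ is 1-Lipschitz from $C^*(F_2)$ into the space of bounded real-valued functions on $I$, and continuity of $N_a$ is preserved under uniform limits. Next, using the universal description of $A_\mu$ from Remark 1.4 together with continuous functional calculus applied to the self-adjoint $x=u+u^*+v+v^*\in C^*(F_2)$, I would identify $A_\mu$ with the quotient $C^*(F_2)/J_\mu$, where $J_\mu$ is the closed two-sided ideal generated by the positive element $h_\mu(x):=(|x|-\mu)_+$. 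Thus $\|a\|_\mu=\|a+J_\mu\|$, the family $(J_\mu)$ is decreasing in $\mu$, and $\mu\mapsto\|a\|_\mu$ is monotone nondecreasing; the problem reduces to showing this monotone function has no jumps.

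For right continuity at $\mu$, take $\mu_n\downarrow\mu$. Since $h_{\mu_n}\to h_\mu$ uniformly on $[-4,4]$, functional calculus gives $h_{\mu_n}(x)\to h_\mu(x)$ in norm. As $h_{\mu_n}(x)\in J_{\mu_n}$, the limit lies in $\overline{\bigcup_n J_{\mu_n}}$, which is a closed two-sided ideal containing the generator of $J_\mu$; hence $J_\mu=\overline{\bigcup_n J_{\mu_n}}$. The standard identity $\|a+\overline{\bigcup_n J_n}\|=\lim_n\|a+J_n\|$ for an increasing union of ideals then yields right continuity.

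Left continuity is the main obstacle. For $\mu_n\uparrow\mu$, set $J':=\bigcap_n J_{\mu_n}\supseteq J_\mu$. The canonical $*$-homomorphism $C^*(F_2)/J'\to\prod_n C^*(F_2)/J_{\mu_n}$ is injective, hence isometric, giving $\|a+J'\|=\sup_n\|a+J_{\mu_n}\|$. Thus left continuity reduces to the nontrivial inclusion $J'\subseteq J_\mu$, i.e., every $b\in\bigcap_n J_{\mu_n}$ must vanish in every $\mu$-constrained representation $\pi$. If $\|\pi(x)\|<\mu$ this is immediate, so the obstruction is the boundary case $\|\pi(x)\|=\mu$. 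My approach is to perturb such $\pi$ through a continuous family of representations with $\|\pi_t(x)\|<\mu$: a natural candidate is the twist $\pi_t:=\pi\otimes\chi_t$ by one-dimensional characters $\chi_t\colon u\mapsto e^{it_1},\,v\mapsto e^{it_2}$, for which $\pi_t(x)=e^{it_1}\pi(u)+e^{-it_1}\pi(u)^*+e^{it_2}\pi(v)+e^{-it_2}\pi(v)^*$ and $\pi_t(b)$ depend continuously on $t$; the vanishing $\pi_t(b)=0$ at small $t\ne 0$ (valid as soon as $\|\pi_t(x)\|<\mu$) would then force $\pi(b)=0$ in the limit. The genuine technical difficulty lies in the degenerate cases --- e.g., the regular representation, for which $\|\pi_t(x)\|\equiv\|\pi(x)\|$ under this twist --- which demand additional tools, such as a functional-calculus perturbation of $x$ inside $\pi(C^*(F_2))''$ or a direct-sum amplification reducing to the generic case. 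Once left continuity on $\mathds{C}[F_2]$ is in hand, the density reduction from the first paragraph completes the proof.
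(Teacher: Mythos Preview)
Your overall framework is sound, and your ideal-theoretic treatment of right continuity---identifying $J_\mu$ as the closed ideal generated by $(|x|-\mu)_+$ and showing $J_\mu=\overline{\bigcup_n J_{\mu_n}}$ for $\mu_n\downarrow\mu$---is correct and arguably cleaner than the paper's argument, which instead builds a representation on the quotient $\prod_n B(\mathcal{H}_n)/\bigoplus_n B(\mathcal{H}_n)$ to reach the same conclusion. The density reduction to $a\in\mathds{C}[F_2]$ is also fine.

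The genuine gap is precisely where you locate it: left continuity in the boundary case $\|\pi(x)\|=\mu$. Your character twist $\pi_t=\pi\otimes\chi_t$ fails not only for the regular representation but quite generically---whenever $\pi(u)$ (or $\pi(v)$) has spectrum invariant under rotation, multiplying by a scalar phase does nothing to $\|\pi_t(u)+\pi_t(u)^*\|$. You acknowledge this and gesture toward a ``functional-calculus perturbation of $x$ inside $\pi(C^*(F_2))''$\,'' or a direct-sum amplification, but no actual construction is given; as written, the argument does not close. The paper's resolution is exactly the missing idea: rather than perturbing $x$, one applies Borel functional calculus to $\pi(u)$ and $\pi(v)$ \emph{individually}. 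For $t\in[0,1]$ define
\[
f_t(e^{i\theta})=
\begin{cases}
e^{\,i\arccos((1-t)\cos\theta)},&\theta\in[0,\pi],\\
e^{-i\arccos((1-t)\cos\theta)},&\theta\in(-\pi,0),
\end{cases}
\]
a Borel map $S^1\to S^1$ satisfying $f_t(z)+\overline{f_t(z)}=(1-t)(z+\bar z)$. Setting $\pi_t(u)=f_t(\pi(u))$ and $\pi_t(v)=f_t(\pi(v))$ yields $\pi_t(x)=(1-t)\pi(x)$, so $\|\pi_t(x)\|=(1-t)\mu<\mu$ for every $t>0$ and every $\pi$, with $\pi_t(a)\to\pi(a)$ strongly as $t\to 0$. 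This works uniformly across all representations, including the degenerate ones your twist cannot handle, and is the step your proposal is missing.
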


\begin{proof}
Given any fixed $\mu_0\in I$, we will prove that $N_a$ is
continuous at $\mu_0$ in two steps: $N_a$ is left and right
continuous at $\mu_0$, respectively.

\textbf{Step 1.} Note that $N_a$ is a non-decreasing function, so
$l=\lim_{\mu\rightarrow \mu_0^-}N_a(\mu)$ exists. Assume that
$l<N_a(\mu_0)$, then there must exist a representation $\pi$ of
$F_2$ such that $\|\pi(u+u^*+v+v^*)\|=\mu_0$ and $l<\|\pi(a)\|\leq
N_a$.

Let us first give a family of Borel functions $\{f_t:S^1\rightarrow S^1\}_{t\in[0,1]}$ as follows:
\begin{equation*}
f_t(e^{i\theta})=
\begin{cases}
e^{i\arccos((1-t)\cos\theta)},& \theta\in [0,\pi]\\
e^{-i\arccos((1-t)\cos\theta)}, & \theta\in (-\pi,0)
\end{cases}
\end{equation*}
Applying Borel functional calculus of $f_t$ to $\pi(u)$ and
$\pi(v)$, we get a new representation $\pi_t$ of $F_2$ which is
defined by $u\mapsto f_t(\pi(u))$ and $v\mapsto f_t(\pi(v))$, and
$\{\pi_t\}_{t\in [0,1]}$ is a continuous family of
representations. Since
$f_t(z)+f_t(\overline{z})=(1-t)(z+\overline{z})$, we have
$\pi_t(u+u^*+v+v^*)=f_t(\pi(u))+f_t(\pi(u^*))+f_t(\pi(v))+f_t(\pi(v^*))=(1-t)(u+u^*+v+v^*)$.
Hence $\|\pi_t(u+u^*+v+v^*)\|<\mu_0$. Meanwhile, $\|\pi_t(a)\|$
varies also continuously, which contradicts the assumption.

\textbf{Step 2.} Assume that, for some $a\in\mathbb C[F_2]$, $N_a$
is not continuous at $\mu_0$ from the right, i.e.,
$N_a(\mu_0)<\lim_{\mu\rightarrow \mu_0^+}N_a(\mu)=r$. Then there
must exist a family of representations $\{\pi_n:F_2\rightarrow
\mathcal{U}(\mathcal{H}_n)\}_{n\in\mathds{N}}$ such that
$\{\|\pi_n(u+u^*+v+v^*)\|\}_{n\in\mathds{N}}$ is a decreasing
sequence convergent to $\mu_0$ and
$\lim_{n\rightarrow\infty}\|\pi_n(a)\|=r$.

Let $\prod_{n\in\mathds{N}}B(\mathcal{H}_n)$ be the $C^*$-algebra
of all sequences $b=(b_1,b_2,\cdots)$, $b_n\in B(\mathcal{H}_n)$,
such that $\|b\|:=\sup_{n\in\mathds{N}}\|b_n\|<\infty$. Let
$\oplus_{n\in\mathds{N}}B(\mathcal{H}_n)$ be the ideal of
$\prod_{n\in\mathds{N}}B(\mathcal{H}_n)$ that consists of
sequences $(b_1,b_2,\cdots)$ such that
$\lim_{n\rightarrow\infty}\|b_n\|=0$. Then
$\prod_{n\in\mathds{N}}B(\mathcal{H}_n)/\oplus_{n\in\mathds{N}}B(\mathcal{H}_n)$
is a quotient $C^*$-algebra. By Gelfand-Naimark-Segal theorem,
there exists a faithful representation
$\rho:\prod_{n\in\mathds{N}}B(\mathcal{H}_n)/\oplus_{n\in\mathds{N}}B(\mathcal{H}_n)\to
B(\mathscr{H})$ for some Hilbert space $\mathscr{H}$.  Let $q:
\prod_{n\in\mathds{N}}B(\mathcal{H}_n)\rightarrow
\prod_{n\in\mathds{N}}B(\mathcal{H}_n)/\oplus_{n\in\mathds{N}}B(\mathcal{H}_n)$
be the canonical quotient map. Note that, if
$b=(b_1,b_2,\cdots)\in \prod_{n\in\mathds{N}}B(\mathcal{H}_n)$,
$\|(\rho\circ
q)(b)\|=\|q(b)\|=\lim\sup_{n\rightarrow\infty}\|b_n\|$.

Let $\pi_\infty$ be the representation of $F_2$ defined by
$u\mapsto (\rho\circ q)((\pi_1(u),\pi_2(u),\cdots))$ and $v\mapsto
(\rho\circ q)((\pi_1(v),\pi_2(v),\cdots))$. Then
$\|\pi_\infty(u+u^*+v+v^*)\|=\lim\sup_{n\rightarrow\infty}\|\pi_n(u+u^*+v+v^*)\|=\mu_0$,
so $\pi_\infty$ is a $\mu_0$-constrained representation of $F_2$.
But
$\|\pi_\infty(a)\|=\lim\sup_{n\rightarrow\infty}\|\pi_n(a)\|=r>\|a\|_{\mu_0}$,
which is a contradiction.

\end{proof}

Recall that $B=\{f\in C(I,C^*(F_2)):\|f(\mu)\|_\mu=0 \ \ \text{for
any}\ \ \mu\in I\}$.

\begin{lemma}\label{ideal}
Set $I_{\mu}=\{a\in C^*(F_2):\|a\|_{\mu}=0\}$. Then
$\{g(\mu_0):g\in B\}=I_{\mu_0}$ for any $\mu_0\in I$.
\end{lemma}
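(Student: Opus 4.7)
The inclusion $\{g(\mu_0) : g \in B\} \subseteq I_{\mu_0}$ is immediate from the definition of $B$: any $g \in B$ satisfies $\|g(\mu_0)\|_{\mu_0} = 0$. The substance of the lemma is the reverse inclusion, and my plan is to obtain it by showing that $D := \{g(\mu_0) : g \in B\}$ is a closed two-sided $*$-ideal of $C^*(F_2)$ that contains a generating family for $I_{\mu_0}$.

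The claim that $D$ is a closed two-sided $*$-ideal is essentially formal: $D$ is the image of the closed ideal $B$ under the $*$-homomorphism $\mathrm{ev}_{\mu_0} : C(I, C^*(F_2)) \to C^*(F_2)$, so $D$ is a closed $*$-subalgebra (images of $*$-homomorphisms between $C^*$-algebras are always closed); left and right multiplication by an arbitrary $d \in C^*(F_2)$ preserves $D$ because constant functions lie in $A = C(I, C^*(F_2))$ and $B$ is a two-sided ideal of $A$.

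Next, by Remark~1.5, $A_{\mu_0}$ is the universal $C^*$-algebra generated by two unitaries $u, v$ subject to $\|u + u^* + v + v^*\| \leq \mu_0$; writing $x = u + u^* + v + v^*$, I claim that $I_{\mu_0}$ coincides with the closed two-sided ideal $J \subset C^*(F_2)$ generated by $\{h(x) : h \in C[-4, 4],\ h|_{[-\mu_0, \mu_0]} = 0\}$. Indeed, $J \subseteq I_{\mu_0}$ because in $A_{\mu_0}$ the image of $x$ has norm at most $\mu_0$; conversely, in $C^*(F_2)/J$ the image of $x$ is self-adjoint with $\|x\| \leq \mu_0$, so by the universal property $A_{\mu_0}$ surjects onto $C^*(F_2)/J$, and combining with the quotient $C^*(F_2)/J \twoheadrightarrow A_{\mu_0}$ forces $J = I_{\mu_0}$.

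It therefore suffices to lift each such $h(x)$ to an element of $B$. Given $h$, I define $\gamma_\mu(s) := \mathrm{sgn}(s)\cdot\max\bigl(0,\, |s| - \max(\mu - \mu_0,\, 0)\bigr)$ and $\tilde h_\mu := h \circ \gamma_\mu$, then set $g(\mu) := \tilde h_\mu(x)$ by continuous functional calculus on $x \in C^*(F_2)$. At $\mu = \mu_0$ one has $\gamma_{\mu_0} = \mathrm{id}$ and hence $g(\mu_0) = h(x)$; for general $\mu$ the reparametrization sends $[-\mu, \mu]$ into $[-\mu_0, \mu_0]$, so $\tilde h_\mu$ vanishes on $[-\mu, \mu]$, and since every $\mu$-constrained representation $\pi$ has $\sigma(\pi(x)) \subseteq [-\mu, \mu]$ this forces $\pi(g(\mu)) = 0$, i.e.\ $g \in B$. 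Continuity of $\mu \mapsto \gamma_\mu$ in sup norm (it is $1$-Lipschitz in $\mu$ uniformly in $s$) together with uniform continuity of $h$ on $[-4, 4]$ yields continuity of $\mu \mapsto \tilde h_\mu$, which passes through the functional calculus to give $g \in C(I, C^*(F_2))$. The main technical point is engineering a single reparametrization that is simultaneously continuous in $\mu$ and sends $[-\mu, \mu]$ into the zero-set $[-\mu_0, \mu_0]$ of $h$; the piecewise-linear shift $\gamma_\mu$ above does both at once, uniformly for $\mu_0$ ranging over the whole interval $[0, 4]$ (including the boundary cases $\mu_0 = 0$ and $\mu_0 = 4$, where a pure dilation would be singular).
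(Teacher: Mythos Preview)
Your argument is correct, but it follows a genuinely different route from the paper's. The paper lifts an arbitrary positive $a\in I_{\mu_0}$ directly: it sets $g(\mu)=f_\mu(a)$ with $f_\mu(t)=(t-\|a\|_\mu)_+$, so that $q_\mu(g(\mu))=f_\mu(q_\mu(a))=0$ and $g(\mu_0)=a$; continuity of $g$ comes from Lemma~\ref{norm continuity} (continuity of $\mu\mapsto\|a\|_\mu$). You instead first show that $D=\{g(\mu_0):g\in B\}$ is a closed two-sided ideal, then identify $I_{\mu_0}$ as the ideal generated by the functional calculus elements $h(x)$ with $h|_{[-\mu_0,\mu_0]}=0$ via the universal property of $A_{\mu_0}$ (this is Remark~1.4 rather than~1.5, but that is cosmetic), and finally lift only these generators using the piecewise-linear shift $\gamma_\mu$. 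The paper's proof is shorter and handles a general element in one stroke, but it leans on Lemma~\ref{norm continuity}; your proof is logically independent of Lemma~\ref{norm continuity} and makes the structure of $I_{\mu_0}$ explicit, at the cost of the extra ideal-generation step. Both the closedness of $\mathrm{ev}_{\mu_0}(B)$ and the Lipschitz estimate for $\gamma_\mu$ are sound as you state them, so your argument covers the endpoint cases $\mu_0=0,4$ without the dilation singularity you were worried about.
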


\begin{proof}
From the definition of $B$, it is obvious that $\{f(\mu_0):f\in
B\}\subseteq I_{\mu_0}$, thus we just need to prove the converse
inclusion. An easy observation implies that it suffices to prove
this inclusion for positive elements of $I_{\mu_0}$.

Let $a\in I_{\mu_0}$ be positive. We have to find $g\in B$ such
that $g(\mu_0)=a$. Define a family of continuous functions by
\begin{equation*}
f_{\mu}(t)=
\begin{cases}
0,& t\in (-\infty,\|a\|_{\mu}]\\
t-\|a\|_{\mu},& t\in (\|a\|_{\mu},\infty).
\end{cases}
\end{equation*}

As $\|a\|_\mu=0$ for $\mu\leq \mu_0$, so $f_\mu(a)=a$ for $\mu\leq
\mu_0$. It follows from Lemma \ref{norm continuity} that $f_\mu$
is continuous in $\mu$. Define a function $g:I\to C^*(F_2)$ by
$g(\mu)=f_\mu(a)$. Then $g\in A=C(I,C^*(F_2))$ and $g(\mu_0)=a\in
I_{\mu_0}$.

Let $q_{\mu}:C^*(F_2)\rightarrow C^*(F_2)/I_{\mu}\cong A_{\mu}$
denote the quotient map. As $\|q_\mu(a)\|=\|a\|_\mu$, so
$q_{\mu}(f_{\mu}(a))=f_{\mu}(q_{\mu}(a))=0$, thus
$g(\mu)=f_{\mu}(a)\in I_{\mu}$, hence $g\in B$.

\end{proof}

Since we have treated $A/B$ as a subalgebra of $\prod_{\mu\in
I}A_{\mu}$, for any $b\in A/B$, besides the quotient norm, we can
also treat $b$ as a function defined on $I$ and take the supremum
norm. The following lemma asserts that these two norms coincide.

\begin{lemma}
Let $a\in A$, $b=q(a)\in A/B$. Set
\begin{equation*}
\|b\|_1=\inf_{g\in B}\|a+g\|=\inf_{g\in B}\sup_{\mu\in I}\|a(\mu)+g(\mu)\|
\end{equation*}
and
\begin{equation*}
\|b\|_2=\sup_{\mu\in I}\inf_{g\in B}\|a(\mu)+g(\mu)\|=\sup_{\mu\in
I}\|a(\mu)\|_{\mu}\text{ (by Lemma \ref{ideal})}.
\end{equation*}
Then $\|b\|_1=\|b\|_2$.
\end{lemma}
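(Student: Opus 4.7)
The plan is to prove the two inequalities separately. The direction $\|b\|_2 \leq \|b\|_1$ is essentially formal: for any single $g \in B$ and any $\mu \in I$, one has $\inf_{g' \in B} \|a(\mu)+g'(\mu)\| \leq \|a(\mu)+g(\mu)\| \leq \sup_{\mu' \in I}\|a(\mu')+g(\mu')\|$; taking the supremum over $\mu$ on the left and then the infimum over $g$ on the right gives $\|b\|_2 \leq \|b\|_1$.

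For the reverse inequality $\|b\|_1 \leq \|b\|_2$, I would use a partition of unity argument, which is the main content of the proof. Fix $\varepsilon > 0$. For each $\mu_0 \in I$, Lemma \ref{ideal} gives $\inf_{g \in B}\|a(\mu_0)+g(\mu_0)\| = \|a(\mu_0)\|_{\mu_0} \leq \|b\|_2$, so we can choose $g_{\mu_0} \in B$ with $\|a(\mu_0)+g_{\mu_0}(\mu_0)\| < \|b\|_2 + \varepsilon$. Since $a, g_{\mu_0} \in A = C(I,C^*(F_2))$, the function $\mu \mapsto \|a(\mu)+g_{\mu_0}(\mu)\|$ is continuous on $I$, so there exists an open neighborhood $U_{\mu_0} \subset I$ on which this function remains strictly below $\|b\|_2 + \varepsilon$.

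Now I would invoke compactness of $I = [0,4]$ to extract a finite subcover $U_{\mu_1},\dots,U_{\mu_n}$ with associated correctors $g_{\mu_1},\dots,g_{\mu_n} \in B$, and choose a continuous partition of unity $\phi_1,\dots,\phi_n$ subordinate to it. Define
\begin{equation*}
g := \sum_{i=1}^n \phi_i\, g_{\mu_i}.
\end{equation*}
To see that $g \in B$, note that $g$ is continuous (as a finite sum of products of continuous functions) and that for each $\mu$ the value $g(\mu) = \sum_i \phi_i(\mu) g_{\mu_i}(\mu)$ lies in $I_\mu$, because each $g_{\mu_i}(\mu) \in I_\mu$ and $I_\mu$ is an ideal (in particular closed under scalar multiplication and addition). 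Using $\sum_i \phi_i \equiv 1$, write $a(\mu)+g(\mu) = \sum_i \phi_i(\mu)\bigl(a(\mu)+g_{\mu_i}(\mu)\bigr)$, so
\begin{equation*}
\|a(\mu)+g(\mu)\| \leq \sum_i \phi_i(\mu)\,\|a(\mu)+g_{\mu_i}(\mu)\|;
\end{equation*}
for every index $i$ with $\phi_i(\mu) \neq 0$ we have $\mu \in U_{\mu_i}$, whence $\|a(\mu)+g_{\mu_i}(\mu)\| < \|b\|_2+\varepsilon$, and we conclude $\|a(\mu)+g(\mu)\| < \|b\|_2+\varepsilon$ uniformly in $\mu$. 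Hence $\|b\|_1 \leq \|b\|_2+\varepsilon$, and letting $\varepsilon \to 0$ finishes the proof.

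The main obstacle is the reverse inequality, where one must convert the \emph{pointwise} optimal correctors (supplied by Lemma \ref{ideal}) into a \emph{single} global corrector in $B$ achieving the sup-norm bound. The partition-of-unity trick is the standard remedy and works here precisely because $I$ is compact, $B$ is a $C(I)$-submodule, and the norm function depends continuously on $\mu$ once a candidate $g_{\mu_0}$ is fixed.
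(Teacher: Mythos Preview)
Your argument is correct, but the paper dispatches the lemma in one line by a different (and much shorter) route: since $\iota:A/B\to\prod_{\mu\in I}A_\mu$ is an injective $*$-homomorphism from the $C^*$-algebra $(A/B,\|\cdot\|_1)$ into a $C^*$-algebra, it is automatically isometric, i.e.\ $\|b\|_1=\|\iota(b)\|=\|b\|_2$; the paper phrases this as ``uniqueness of a $C^*$-norm on the $C^*$-algebra $A/B$''. Your partition-of-unity proof is the hands-on version of the same fact specialised to $C_0(X)$-algebras: it trades the abstract structure theorem for an explicit construction of a near-optimal global corrector $g\in B$ from local ones, using compactness of $I$ and the fact that $B$ is a $C(I)$-submodule. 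The advantage of your approach is that it is self-contained and makes transparent exactly where Lemma~\ref{ideal} and the compactness of $I$ enter; the advantage of the paper's approach is brevity and that it requires neither Lemma~\ref{ideal} nor compactness at this stage (injectivity of $\iota$ was already checked earlier).
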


\begin{proof}
This follows from uniqueness of a $C^*$-norm on the $C^*$-algebra
$A/B$.
\end{proof}

\begin{theorem}
$(A_{\mu},I,A/B)$ is a continuous bundle of $C^*$-algebras.
\end{theorem}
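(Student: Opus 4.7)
The plan is to verify Dixmier's four axioms (i)--(iv) for the triple $(A_\mu, I, A/B)$.

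Axioms (i) and (ii) I would dispatch first. Since $A/B$ is a quotient $C^*$-algebra injected into $\prod_\mu A_\mu$ via $\iota$, it is automatically a $*$-subalgebra; and given $c \in A_{\mu_0}$, lifting to $\widetilde c \in C^*(F_2)$ via surjectivity of $q_{\mu_0}$ and evaluating $\iota$ on the class of the constant function $a(\mu) \equiv \widetilde c$ yields $\iota(q(a))(\mu_0) = c$, so the fibre at $\mu_0$ is in fact hit surjectively, which is stronger than the density required.

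The main content lies in axiom (iii). For $b = q(a) \in A/B$ with $a \in C(I, C^*(F_2))$, the preceding lemma identifies $\|b(\mu)\| = \|a(\mu)\|_\mu$. Fixing $\mu_0 \in I$, I would split via the triangle inequality
\[
\bigl|\|a(\mu)\|_\mu - \|a(\mu_0)\|_{\mu_0}\bigr| \;\leq\; \|a(\mu) - a(\mu_0)\|_4 \;+\; \bigl|\|a(\mu_0)\|_\mu - \|a(\mu_0)\|_{\mu_0}\bigr|,
\]
where $\|\cdot\|_4$ denotes the $C^*(F_2)$-norm. The first summand vanishes as $\mu \to \mu_0$ by norm-continuity of $a$ as a map $I \to C^*(F_2)$, together with the domination $\|\cdot\|_\mu \leq \|\cdot\|_4$; the second vanishes by Lemma \ref{norm continuity} applied to the fixed element $a(\mu_0)$. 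Separating these two sources of variation is the only real difficulty I anticipate in the proof.

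For axiom (iv) I would argue by partition of unity. Assume $a \in \prod_\mu A_\mu$ is locally $\varepsilon$-approximable by elements of $A/B$. Compactness of $I = [0,4]$ produces a finite open cover $\{U_k\}$ and elements $b_k \in A/B$ with $\|a(\mu) - b_k(\mu)\| < \varepsilon$ on $U_k$. Choose a subordinate partition of unity $\{\varphi_k\}$. Because $A = C(I, C^*(F_2))$ is a $C(I)$-module with $B$ a $C(I)$-submodule, the element $b_\varepsilon := \sum_k \varphi_k b_k$ lies in $A/B$, and a pointwise convex-combination estimate gives $\sup_\mu \|a(\mu) - b_\varepsilon(\mu)\| \leq \varepsilon$. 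Thus $a$ belongs to the supremum-norm closure of $A/B$; the preceding lemma identifies this supremum norm with the $C^*$-norm on $A/B$, in which $A/B$ is already complete, so $a \in A/B$.
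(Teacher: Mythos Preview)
Your proposal is correct and follows essentially the same route as the paper: axioms (i) and (ii) are disposed of directly, axiom (iii) is handled by the identical triangle-inequality splitting into a term controlled by the continuity of $a$ in the $C^*(F_2)$-norm and a term controlled by Lemma~\ref{norm continuity}, and axiom (iv) is obtained by the same finite-subcover partition-of-unity argument on the compact interval $I$. The only cosmetic difference is that the paper keeps the moving norm $\|\cdot\|_{\mu'}$ in the first summand whereas you keep the fixed element $a(\mu_0)$ in the second; both variants are dominated by $\|a(\mu)-a(\mu_0)\|_{\max}$, so nothing substantive changes.
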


\begin{proof}
Let us check the conditions from the definition of a continuous
bundle of $C^*$-algebras one by one.

(i) and (ii) are obviously satisfied and $\{a(\mu):a\in A/B\}$
equals $A_{\mu}$.

For any $b\in A/B$ with $b=q(a)$ where $a\in A$, given
$\mu,\mu'\in I$,
\begin{align*}
&|\|b(\mu')\|_{\mu'}-\|b(\mu)\|_{\mu}|\\
\leq&|\|a(\mu')\|_{\mu'}-\|a(\mu)\|_{\mu}|\\
\leq&|\|a(\mu')\|_{\mu'}-\|a(\mu)\|_{\mu'}|+|\|a(\mu)\|_{\mu'}-\|a(\mu)\|_{\mu}|\\
\leq&\|a(\mu')-a(\mu)\|_{\mu'}+|\|a(\mu)\|_{\mu'}-\|a(\mu)\|_{\mu}|\\
\leq&\|a(\mu')-a(\mu)\|_{\text{max}}+|\|a(\mu)\|_{\mu'}-\|a(\mu)\|_{\mu}|,
\end{align*}
If $\mu'$ is close to $\mu$ then $\|a(\mu')-a(\mu)\|_{\text{max}}$
is small because the function $\mu\mapsto a(\mu)$ is continuous,
and $\|a(\mu)\|_{\mu'}-\|a(\mu)\|_{\mu}$ is small due to Lemma
\ref{norm continuity}, therefore, the map
$\mu\mapsto\|b(\mu)\|_\mu$ is continuous, i.e., (iii) is
satisfied.

Suppose $z\in \prod_{\mu\in I}A_{\mu}$ such that for every $\mu\in
I$ and every $\varepsilon>0$, there exists an $b\in A/B$ such that
$\|z(\mu)-b(\mu)\|\leq\varepsilon$ in some neighborhood $U_\mu$ of
$\mu$. Thus we obtain an open covering $\{U_{\mu}\}_{\mu\in I}$ of
$I$. Let $\{U_i\}_{i=1}^{p}$ be its finite sub-covering and let
$(\eta_1,\ldots,\eta_p)$ be a continuous partition of unity in $I$
subordinate to the covering $\{U_i\}_{i=1}^{p}$. Then
$$
\|z(\mu)-\eta_1(\mu)b_1(\mu)-\cdots-\eta_p(\mu)b_p(\mu)\|\leq\varepsilon,\text{
for any $\mu\in I$},
$$
or equivalently,
$$\|z-\eta_1b_1-\cdots-\eta_pb_p\|\leq \varepsilon.$$
Since $\varepsilon>0$ is arbitrary, $\eta_ib_i$ belongs to $A/B$
and $A/B$ is norm closed, we have $z\in A/B$. So (iv) is
satisfied.

\end{proof}

\section{$A_0$ as an amalgamated free product}

Here we identify $A_0$ as an amalgamated product of
$C^*$-algebras.

Recall that, given $C^*$-algebras $A_1$, $A_2$ and $B$ and
embeddings $i_k :B\rightarrow A_k$, $k = 1, 2$, the
\emph{amalgamated free product} is the $C^*$-algebra, denoted
$A_1*_BA_2$, together with embeddings $j_k : A_k \rightarrow
A_1*_BA_2$, satisfying $j_1\circ i_1 = j_2 \circ i_2$, such that
the following holds: if $\phi_k : A_k\rightarrow A$, $k = 1, 2$,
are $*$-homomorphisms with $\phi_1\circ i_1 = \phi_2 \circ i_2$
then there is a unique $*$-homomorphism $\phi:
A_1*_BA_2\rightarrow A$ such that $\phi\circ j_k = \phi_k$, $k =
1,2$. The $*$-homomorphism $\phi$ induced by $\phi_1$ and $\phi_2$
will sometimes be denoted by $\phi_1*_B\phi_2$.

Let $p:S^1\to[-1,1]$ be the projection of the circle $x^2+y^2=1$
onto the $x$ axis. It induces an inclusion $i_1:C[-1,1]\to C(S^1)$
such that $i_1(\operatorname{id})=z+\overline{z}$, where $z=x+iy$
is the coordinate on $S^1$ and $\operatorname{id}$ is the identity
function on $C[-1,1]$. Let $\tau:C[-1,1]\to C[-1,1]$ be the flip
automorphism, which changes the orientation of the interval and is
given by $\operatorname{id}\mapsto -\operatorname{id}$. Set
$i_2=i_1\circ\tau$. Then
$i_2(\operatorname{id})=-(w+\overline{w})$.

The inclusions $i_1$ and $i_2$ of $C[-1,1]$ into $C(S^1)$ give us
the amalgamated free product $D=C(S^1)\ast_{C[-1,1]}C(S^1)$.

\begin{lemma}
$C^*$-algebras $A_0$ and $D$ are isomorphic.

\end{lemma}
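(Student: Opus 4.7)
The plan is to exhibit $A_0$ and $D$ as the same universal object by building mutually inverse $*$-homomorphisms via their respective universal properties. The key observation is that the defining relation $\|u+u^*+v+v^*\|\le 0$ of $A_0$ forces $u+u^*=-(v+v^*)$, which is exactly the condition expressing that the two copies of the subalgebra generated by the real part of a unitary are identified via the flip $\tau$. Since $C(S^1)$ is the universal $C^*$-algebra on one unitary and $C[-1,1]$ is generated inside it by $z+\bar z$ (the image of $\operatorname{id}$ under $i_1$), this is precisely the data defining $D=C(S^1)\ast_{C[-1,1]}C(S^1)$.

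First I would construct $\phi:D\to A_0$. Let $\sigma_1,\sigma_2:C(S^1)\to A_0$ be the $*$-homomorphisms determined by $z\mapsto u$ and $z\mapsto v$ (these exist since $u$ and $v$ are unitaries in $A_0$). To invoke the universal property of the amalgamated free product it suffices to verify $\sigma_1\circ i_1=\sigma_2\circ i_2$ on the generator $\operatorname{id}$ of $C[-1,1]$: on the one side $\sigma_1(i_1(\operatorname{id}))=u+u^*$, on the other $\sigma_2(i_2(\operatorname{id}))=-(v+v^*)$, and these agree in $A_0$ by the defining relation. This yields $\phi=\sigma_1\ast_{C[-1,1]}\sigma_2$ with $\phi\circ j_1=\sigma_1$ and $\phi\circ j_2=\sigma_2$.

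Next I would construct $\psi:A_0\to D$ using Remark 2.4, which characterises $A_0$ as the universal $C^*$-algebra on two unitaries $u,v$ with $\|u+u^*+v+v^*\|\le 0$. Writing $z,w$ for the canonical unitary generators of the two $C(S^1)$ factors, set $U=j_1(z)$ and $V=j_2(w)$ in $D$. Then $U$ and $V$ are unitaries, and the amalgamation identity $j_1\circ i_1=j_2\circ i_2$ applied to $\operatorname{id}\in C[-1,1]$ gives $U+U^*=j_1(z+\bar z)=j_2(-(w+\bar w))=-(V+V^*)$, so $U+U^*+V+V^*=0$ in $D$. The universal property of $A_0$ then provides $\psi:A_0\to D$ with $\psi(u)=U$ and $\psi(v)=V$.

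Finally I would check that $\phi$ and $\psi$ are mutually inverse by chasing generators: $\phi\psi(u)=\phi(U)=\phi(j_1(z))=\sigma_1(z)=u$ and similarly $\phi\psi(v)=v$, so $\phi\psi=\operatorname{id}_{A_0}$ since $A_0$ is generated by $u,v$; conversely $\psi\phi\circ j_1(z)=\psi(u)=U=j_1(z)$ and $\psi\phi\circ j_2(w)=V=j_2(w)$, so $\psi\phi=\operatorname{id}_D$ since $D$ is generated by $j_1(C(S^1))\cup j_2(C(S^1))$. There is no real obstacle beyond carefully applying the two universal properties; the only point requiring a moment of thought is recognising that the flip $\tau$ built into $i_2$ is precisely what encodes the sign in the relation $u+u^*=-(v+v^*)$.
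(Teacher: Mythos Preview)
Your proof is correct and follows essentially the same strategy as the paper: build maps in both directions from the universal properties of $D$ and of $A_0$, then check on generators that they are mutually inverse. The only cosmetic difference is that the paper obtains the map $A_0\to D$ by first using the universal property of $A_4=C^*(F_2)$ and then factoring through the quotient, whereas you invoke the universal property of $A_0$ directly; the two arguments are equivalent.
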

\begin{proof}
Recall that $A_0$ is a universal $C^*$-algebra generated by two
unitaries, $u$ and $v$, with a single relation $u+u^*=-(v+v^*)$.

Let $\tilde u$, $\tilde v$ be generators for the two copies of
$C(S^1)$. Define $\varphi_k:C(S^1)\to A_0$, $k=1,2$, by
$\varphi_1(\tilde u)=u$, $\varphi_2(\tilde v)=v$. Then
$\varphi_1\circ i_1=\varphi_2\circ i_2$, hence the maps
$\varphi_k$ give rise to a $*$-homomorphism $D\to A_0$.

Using universality of $A_4$, we can construct a $*$-homomorphism
$\psi:A_4\to D$ by setting $\psi(u)=\tilde u\ast 1$,
$\psi(v)=1\ast\tilde v$. Note that $A_0$ is the quotient of $A_4$
under a single relation $u+u^*=-(v+v^*)$, and
$\psi(u+u^*)=-\psi(v+v^*)$, therefore, $\psi$ factorizes through
$A_0$, thus giving a $*$-homomorphism from $A_0$ to $D$.

The two $*$-homomorphisms $D\to A_0$ and $A_0\to D$ are obviously
inverse to each other, hence the two $C^*$-algebras are
isomorphic.

\end{proof}

Now we may apply the $K$-theory exact sequence for amalgamated
free products due to Cuntz \cite{Cuntz}:
\begin{equation*}
\begin{xymatrix}{
K_0(C[-1,1]) \ar[r]^-{({i_1},{i_2})} & K_0(C(S^1))\oplus K_0(C(S^1)) \ar[r]^-{{j_1}-{j_2}} & K_0(A_0) \ar[d]\\
K_1(A_0) \ar[u]&  K_1(C(S^1))\oplus K_1(C(S^1))
\ar[l]^-{{j_1}-{j_2}} & K_1(C[-1,1])\ar[l]^-{({i_1},{i_2})}}
\end{xymatrix}
\end{equation*}

\begin{corollary}\label{K-theory of A_0}
\begin{enumerate}[(i)]
\item $K_0(A_0)\cong \mathds{Z}$ and is generated by the class $[1]$ of unit element;\\
\item $K_1(A_0)\cong \mathds{Z}^2$ and is generated by $[u]$ and $[v]$,
which are considered as elements of the first and the second copy of $C(S^1)$ respectively.
\end{enumerate}
\end{corollary}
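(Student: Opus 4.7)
The plan is to evaluate the six-term Cuntz exact sequence displayed just before the corollary by substituting the known $K$-groups of the building blocks $C[-1,1]$ and $C(S^1)$ and tracking the generators through each map.

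First I would record the input data: since $[-1,1]$ is contractible, $K_0(C[-1,1])\cong\mathds{Z}$ is generated by $[1]$ and $K_1(C[-1,1])=0$; and $K_0(C(S^1))\cong\mathds{Z}$ is generated by $[1]$, while $K_1(C(S^1))\cong\mathds{Z}$ is generated by the class of the canonical unitary ($[\tilde u]$ in the first copy, $[\tilde v]$ in the second).

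Next I would identify the map $(i_{1*},i_{2*})\colon K_0(C[-1,1])\to K_0(C(S^1))\oplus K_0(C(S^1))$. Because both $i_1$ and $i_2=i_1\circ\tau$ are unital $*$-homomorphisms, each sends $[1]\mapsto[1]$, so this map is the diagonal embedding $n\mapsto(n,n)$. Combined with $K_1(C[-1,1])=0$, the $K_0$ portion of the exact sequence collapses to
\[
K_0(A_0)\;\cong\;\mathds{Z}^{2}/\langle(1,1)\rangle\;\cong\;\mathds{Z},
\]
and since $(j_{1*}-j_{2*})(1,0)=[1]$ in $K_0(A_0)$, the unit class is a generator, establishing (i).

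For $K_1$ the relevant strand of the sequence reads
\[
0\;\longrightarrow\;\mathds{Z}^{2}\;\xrightarrow{\,j_{1*}-j_{2*}\,}\;K_1(A_0)\;\xrightarrow{\,\partial\,}\;\mathds{Z}\;\xrightarrow{\,(i_{1*},i_{2*})\,}\;\mathds{Z}^{2},
\]
whose rightmost arrow is again the diagonal and hence injective. Therefore $\partial=0$ and $j_{1*}-j_{2*}$ is an isomorphism $\mathds{Z}^2\cong K_1(A_0)$. Chasing the generators $[\tilde u]$ and $[\tilde v]$ of the two copies of $K_1(C(S^1))$ through $j_{1*}-j_{2*}$ produces $[u]$ and $-[v]$ in $K_1(A_0)$, which proves (ii). The computation is essentially bookkeeping; the only things worth verifying carefully are that both $i_1$ and $i_2$ are unital (so that the image in $K_0\oplus K_0$ really is the diagonal rather than, say, antidiagonal), and that the orientation-reversing flip $\tau$ acts trivially on $K_*(C[-1,1])$ — automatic here because $K_1$ vanishes and $\tau$ preserves the unit.
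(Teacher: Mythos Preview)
Your proposal is correct and is exactly the computation the paper has in mind: the paper simply displays the Cuntz six-term sequence and states the corollary without writing out the bookkeeping, and you have filled in precisely those routine steps (identifying $(i_{1*},i_{2*})$ as the diagonal, using injectivity of that diagonal to kill the boundary map, and reading off the generators). There is nothing to add or correct.
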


\section{$K$-Groups of $A_{\mu}$}

In \cite{Cuntz}, J. Cuntz proved that $K_0(C^*(F_2))\cong
\mathds{Z}$, $K_1(C^*(F_2))\cong \mathds{Z}^2$. Here we use his
method to calculate the $K$-groups for $A_{\mu}$, $0\leq\mu<4$.

\begin{remark} From Corollary \ref{K-theory of A_0} we can get
some information about $K$-groups of $A_{\mu}$($0<\mu<4$). Since
the quotient map $A_4\to A_0$ factorizes through $A_\mu$ and
induces an isomorphism in $K$-theory, we may conclude that
$K_*(A_\mu)$ contains $K_*(A_4)$ as a direct summand.

\end{remark}

In Section 1 we show that $A_{\mu}$ posseses certain continuity
with respect to $\mu$, together with the fact that the $K$-groups
of $A_0$ and $C^*(F_2)$ are the same, it would be reasonable to
conjecture that all $A_{\mu}$($0\leq \mu\leq 4$) have the
\emph{same} $K$-groups. Below we give a proof of this conjecture.
The idea of the proof is taken from \cite{Cuntz} (cf. Appendix in
\cite{Wassermann}): to construct a homotopy between the universal
representation of $F_2$ and the trivial representation. But the
trivial representation is not constrained for any $\mu<4$, so we
have to replace it by some other representation.

\begin{theorem}
The quotient map $A_4\to A_\mu$ induces an isomorphism of their
$K_*$-groups.

\end{theorem}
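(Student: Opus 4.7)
The strategy is to check that the split-injective map $(q_\mu)_*$ is also surjective, via a Cuntz-style homotopy carried out within the $\mu$-constrained world.

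\textbf{Injectivity (for free).} The composition $A_4\xrightarrow{q_\mu}A_\mu\to A_0$ induces an isomorphism on $K$-theory (compare Cuntz's computation of $K_*(C^*(F_2))$ with Corollary~\ref{K-theory of A_0}), so $(q_\mu)_*$ is split injective and $K_*(A_4)$ embeds in $K_*(A_\mu)$ as a direct summand. Only surjectivity remains.

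\textbf{Surjectivity via a constrained homotopy.} Following \cite{Cuntz} (compare the appendix of \cite{Wassermann}), I would construct, inside some matrix algebra $M_n(A_\mu)$, continuous paths of unitaries $U_t,V_t$, $t\in[0,1]$, satisfying
\[
\|U_t+U_t^*+V_t+V_t^*\|\le\mu\quad\text{for all }t,
\]
so that by the universal property of $A_\mu$ recorded after Definition~1.2 they determine a homotopy of $*$-homomorphisms $\Phi_t\colon A_\mu\to M_n(A_\mu)$. The endpoint $\Phi_0$ should be a stabilisation of the identity, while $\Phi_1$ should factor through a $*$-homomorphism whose range is visibly supported on classes lying in the image of $(q_\mu)_*$. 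In Cuntz's original unconstrained construction, the other endpoint is taken to be the trivial representation $u,v\mapsto 1$; this is not $\mu$-constrained when $\mu<4$, so I would replace it by the character
\[
\chi_\mu\colon u,v\longmapsto\alpha_\mu,\qquad \alpha_\mu:=\tfrac{\mu}{4}+i\sqrt{1-(\mu/4)^2},
\]
which satisfies $4\operatorname{Re}(\alpha_\mu)=\mu$ and so gives a legitimate $*$-homomorphism $A_\mu\to\mathds{C}$. Homotopy invariance of $K$-theory then equates $(\Phi_0)_*$ with $(\Phi_1)_*$ and pins $K_*(A_\mu)$ down to the subgroup generated by $[1]$, $[u]$, $[v]$, each of which comes from $A_4$ via $q_\mu$.

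\textbf{Main obstacle.} The delicate step is to produce the paths $U_t, V_t$ while preserving the $\mu$-constraint. Conjugation by a constant-in-$t$ path of scalar unitaries in $U(n)\subset M_n(\mathds{C})\subset M_n(A_\mu)$ preserves $\|U_t+U_t^*+V_t+V_t^*\|$ trivially but cannot by itself deform the generators into scalars. Cuntz's telescoping trick uses rotations whose matrix entries involve $u$ and $v$ themselves, and adapting this to the constrained setting requires care. I would handle this by working in a sufficiently large matrix size $M_n(A_\mu)$, arranging that at each $t$ the element $U_t+U_t^*+V_t+V_t^*$ remains unitarily equivalent to a fixed block-diagonal operator in which $u+u^*+v+v^*$ occupies a single block together with scalar blocks of norm at most $\mu$, thereby keeping the constraint intact throughout the deformation.
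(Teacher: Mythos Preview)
Your plan is aimed in the right direction and you have correctly isolated the obstacle, but the paragraph labelled ``Main obstacle'' is where the proof actually lives, and what you write there is not yet a mechanism. Saying you will ``arrange that $U_t+U_t^*+V_t+V_t^*$ remains unitarily equivalent to a fixed block-diagonal operator'' is a statement of hope, not a construction; nothing in Cuntz's original telescoping produces such a unitary equivalence, and it is not clear one can. In the paper the constraint is kept not by freezing $\Phi_t(x)$ up to unitary equivalence, but by an explicit computation showing $\|\lambda_t(x)\|=\sin t\cdot\|x\|\le\|x\|\le\mu$.

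The concrete ingredient you are missing is an auxiliary $*$-homomorphism into a \emph{commutative} target that kills $x$ exactly. Set $\alpha(z)=-\operatorname{Re}z+i|\operatorname{Im}z|$ on $S^1$; then $\operatorname{Re}(z+\alpha(z))=0$, so sending $u\mapsto\operatorname{diag}\bigl((z,1),(-1,\alpha(z))\bigr)$ and $v\mapsto\operatorname{diag}\bigl((\alpha(z),-1),(1,z)\bigr)$ defines $\phi:A_\mu\to M_2\bigl(C(S^1\vee S^1)\bigr)$ with $\phi(x)=0$. Composing with the obvious $\psi:C(S^1\vee S^1)\to M_2(A_\mu)$ gives $\psi\circ\phi:A_\mu\to M_4(A_\mu)$, and a single rotation in the $(1,4)$-plane applied to the $v$-slot provides the homotopy to $\operatorname{id}\oplus\tau_1\oplus\tau_2\oplus\tau_3$ with the norm bound above. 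The factorization through $B=C(S^1\vee S^1)$ is also what makes surjectivity transparent: $\psi$ is the restriction of a map defined on $A_4$, so $\psi_*$ already lands in the image of $(q_\mu)_*$, and $\operatorname{id}_{K_*(A_\mu)}=\psi_*\phi_*-3\tau_*$ forces surjectivity (after a short separate argument for $K_0$ since $\tau_*\neq 0$ there). Finally, your replacement character $\chi_\mu$ is fine but unnecessarily complicated: since $u,v\mapsto i$ already satisfies the constraint with norm $0$, one may as well take $\alpha_\mu=i$ uniformly in $\mu$, which is what the paper does.
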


\begin{proof}
Let $B=C(S^1\vee S^1)$ be the $C^*$-algebra of continuous
functions on the wedge $S^1\vee S^1$ of two circles. This is the
algebra of pairs of functions $(f,g)$, $f,g \in C(S^1)$ such that
$f(1) = g(1)$, where $1\in S^1$ is the common point of the two
circles (we consider the circle as the subset of the complex plane
given by $|z|=1$). Then $K_0(B)\cong \mathds{Z}$, $K_1(B)\cong
\mathds{Z}^2$.

%It implies from the diagram above that $K_0(C(S^1\vee S^1))\cong \mathds{Z}, K_1(C(S^1\vee S^1))\cong \mathds{Z}^2$.

Set $\alpha(z)=-\operatorname{Re}z + i| \operatorname{Im} z|$.
Since $|\alpha(z)| = 1$, this is a function from $S^1$ to itself
with the trivial winding number (equivalently, the trivial
homotopy class). Note that $\operatorname{Re}(z + \alpha(z))$ = 0.

For each $\mu$ we define $*$-homomorphisms $\phi: A_{\mu}\rightarrow M_2(B)$ and $\psi : B \rightarrow M_2(A_{\mu})$ as follows,

Set $\psi: (z,1)\mapsto \begin{pmatrix} u & 0\\ 0 & 1\end{pmatrix}; (1,z)\mapsto \begin{pmatrix} 1 & 0\\ 0 & v\end{pmatrix}$.

Note that $\psi$ defines a $*$-homomorphism from $B$ to $M_2(A_{\mu})$ for $\mu= 4$, hence one can
pass to the quotient to obtain a $*$-homomorphism to $M_2(A_{\mu})$ for arbitrary $\mu$.

Set $\phi: u\mapsto \begin{pmatrix}(z,1) & (0,0)\\(0,0) & (-1,
\alpha(z))\end{pmatrix};v\mapsto \begin{pmatrix}(\alpha(z),-1)&
(0,0)\\(0,0) & (1,z)\end{pmatrix}$.

Note that $\phi(u+u^*+v+v^*)=\begin{pmatrix} (0,0) & (0,0)\\ (0,0)
& (0,0)\end{pmatrix}$, so $\phi$ is well-defined as a
$*$-homomorphism from $A_0$ to $M_2(B)$. Then it is well-defined
for any $\mu$.

For the composition $\psi \circ \phi: A_{\mu}\rightarrow
M_4(A_\mu)$, one has
$$
(\psi \circ \phi)(u)= \begin{pmatrix}u\\ & 1\\ & & -1\\& &
&\alpha(v)\end{pmatrix};\qquad (\psi \circ \phi)(v)=
\begin{pmatrix}\alpha(u)\\ & -1\\ & & 1\\& & &v\end{pmatrix}.
$$

Set
$$
V_t=\begin{pmatrix}\cos t & 0& 0 & \sin t\\0 & 1&  0& 0\\0& 0& 1&
0\\-\sin t &0& 0 & \cos
t\end{pmatrix}\begin{pmatrix}\alpha(u)\\&-1\\&&1\\&&&v\end{pmatrix}\begin{pmatrix}\cos
t & 0& 0 & -\sin t\\0 & 1&  0& 0\\0& 0& 1& 0\\\sin t &0& 0 & \cos
t\end{pmatrix},
$$
$t\in[0,\pi/2]$. Then one can define a homotopy of
$*$-homomorphisms $\lambda_t:A_{\mu}\rightarrow M_4(A_{\mu})$ by
$$
\lambda_t(u)=\psi\circ\phi(u);\qquad \lambda_t(v)=V_t.
$$
Indeed, direct calculation shows that
\begin{eqnarray*}
\|\lambda_t(x)\|&=&\left\|\begin{pmatrix}\sin^2t\cdot x&0&0&\sin
t\cos t\cdot x\\0&0&0&0\\0&0&0&0\\ \sin t\cos t\cdot
x&0&0&-\sin^2t\cdot
x\end{pmatrix}\right\|\\&=&\left\|\begin{pmatrix}\sin^2t&\sin
t\cos t\\ \sin t\cos t&-\sin^2
t\end{pmatrix}\right\|\cdot\|x\|=\sin t\cdot\|x\|\leq\|x\|\leq\mu,
\end{eqnarray*}
where $x=u+u^*+v+v^*$, hence $\lambda_t$ is continuous for any
$t\in[0,\pi/2]$.

Then $\lambda_0$ and $\lambda_{\pi/2}$ induce the same map for the
$K$-theory. At the end-points one has $\lambda_0=\psi\circ\phi$
and $\lambda_{\pi/2}=\operatorname{id}_{A_{\mu}}\oplus \tau_1
\oplus \tau_2 \oplus \tau_3$, where $\tau_1(u)=1_{A_\mu}$,
$\tau_1(v)=-1_{A_\mu}$; $\tau_2(u)=-1_{A_\mu}$,
$\tau_2(v)=1_{A_\mu}$; $\tau_3(u)=\alpha(v)$,
$\tau_3(v)=\alpha(u)$.

Let $\tau:A_{\mu}\rightarrow A_{\mu}$ be a $*$-homomorphism given
by $\tau(u)=\tau(v)=i\cdot 1_{A_\mu}$. The formulas $u_t=
-t\operatorname{Re}v+i\sqrt{1-t^2(\operatorname{Re}v)^2},v_t=
-t\operatorname{Re}u+i\sqrt{1-t^2(\operatorname{Re}u)^2}$,
$t\in[0,1]$, provide a homotopy connecting $\tau_3$ and $\tau$.
Similarly, $\tau_1$ and $\tau_2$ are homotopic to $\tau$ due to
the homotopies $u_t=(\pm\cos t+i\sin t)\cdot 1_{A_\mu}$,
$v_t=(\mp\cos t+i\sin t)\cdot 1_{A_\mu}$, $t\in [0,\pi/2]$. All
these homotopies satisfy the constraint
$\|u_t+u_t^*+v_t+v_t^*\|\leq\mu$ when $\|u+u^*+v+v^*\|\leq\mu$.

Thus, for the induced maps in $K_*$-groups one has
$(\psi\circ\phi)_*=\operatorname{id}_{K_*(A_\mu)}+3\tau_*$, or,
equivalently,
$$
\operatorname{id}_{K_*(A_\mu)}=(\psi\circ\phi)_*-3\tau_*.
$$

Note that $\tau$ factorizes through $\mathds{C}$:
$\tau:A_{\mu}\rightarrow \mathds{C}\rightarrow A_{\mu}$.
Therefore, for $K_1$, the map $\tau_*:K_1(A_\mu)\to K_1(A_\mu)$ is
zero (as $K_1(\mathds{C})=0$), so
$\operatorname{id}_{K_1(A_{\mu})}=(\psi\circ\phi)_*$.

For any $\mu\in(0,4)$, consider the commuting diagram
$$
\begin{xymatrix}{
K_1(A_4)\ar[d]\ar[rd]\ar@{=}[rr]&&K_1(A_4)\ar[d]\\
K_1(A_\mu)\ar[r]^-{\phi_*}\ar[d]&K_1(B)\ar[ur]\ar[dr]\ar[r]^-{\psi_*}&K_1(A_\mu)\ar[d]\\
K_1(A_0)\ar[ru]\ar@{=}[rr]&&K_1(A_0) }
\end{xymatrix}
$$
where the diagonal arrows are isomorphisms and the compositions of
the vertical arrows are identity maps. The latter implies that the
map $K_1(A_4)\to K_1(A_\mu)$ is injective. If it is not
surjective, there would exist some element in $K_1(A_\mu)$ that
doesn't come from $K_1(A_4)$, but this contradicts
$\operatorname{id}_{K_1(A_\mu)}=\psi_*\circ\phi_*$. Thus, the map
$K_1(A_4)\to K_1(A_\mu)$ induced by the quotient map is an
isomorphism.

As the map $\tau_*:K_0(A_\mu)\to K_0(A_\mu)$ is not trivial, the
case of $K_0$ is slightly more difficult, and we deal with it
below.

Recall that $\tau$ factorizes through $\mathds{C}$. Let
$\rho:A_\mu\to\mathds{C}$ denote the character such that
$\tau=\iota\circ\rho$, where $\iota:\mathds{C}\to A_\mu$ is the
canonical inclusion of scalars, $\iota(\lambda)=\lambda\cdot
1_{A_\mu}$. Then $\rho(u)=\rho(v)=i$.

Note that the composition $\begin{xymatrix}{K_0(\mathds{C})
\ar[r]^-{\iota_*} & K_0(A_{\mu}) \ar[r]^-{\rho_*} &
K_0(\mathds{C})}\end{xymatrix}$ is the identity map on
$K_0(\mathds{C})$. Thus $K_0(A_{\mu})=K_0(\mathds{C})\oplus \ker
\tau_*$.

Let $\sigma:B\rightarrow \mathds{C}$ be a $*$-homomorphism defined
by $\sigma((z,1))=\sigma((1,z))=i$. Then
$\sigma((-1,\alpha(z)))=\alpha(i)=i$. Therefore,
$\sigma(\phi(u))=\sigma(\phi(v))=\begin{pmatrix}i&0\\0&i\end{pmatrix}$,
hence $\sigma\circ\phi=\rho\oplus\rho$.

Let $p\in K_0(A_\mu)$, $p\in\ker\rho_*=\ker\tau_*$. Then $(\sigma
\circ \phi)_*(p)=2\rho_*(p)=0$. As $\sigma_*:K_0(B)\to
K_0(\mathds{C})$ is an isomorphism, so $p\in \ker \phi_*$.

Since $\operatorname{id}_{K_0(A_\mu)}=(\psi\circ\phi)_*-3\tau_*$,
$$
p=(\psi\circ\phi)_*(p)-3\tau_*(p)=\psi_*(\phi_*(p))-3\iota_*(\rho_*(p))=0,
$$
hence $\ker \tau_*=0$, $K_0(A_{\mu})\cong \mathds{Z}$ (generated
by $[1_{A_\mu}]$).

\end{proof}

\begin{ack*}
Part of this work was done during the stay of the second author at the Focused Semester on $KK$-Theory and its Applications held at the University of M\"{u}nster in May-June 2009. He would like to express his gratitude to the organizers, especially Prof. Echterhoff and Dr. Paravicini, for their kind hospitality.
\end{ack*}

\bibliographystyle{amsplain}

\begin{thebibliography}{10}

%\bibitem{Davidson} K. R. Davidson, \textit{$C^*$-Algebras by Example}, Fields Institute Monographs 6, AMS, 1996.


%\bibitem{Kadison and Ringrose} R. Kadison and J. Ringrose, \textit{Fundamentals of the Theory of Operator Algebras vol 1}, Academic Press, 1983.

%\bibitem{Reed and Simon} M. Reed and B. Simon, \textit{Methods of Mathematical Physics}, Academic Press, 1972.

\bibitem{delaHarpe} B. Bekka, P. de la Harpe, A. Valette, \textit{Kazhdan's property (T)}.
New Math. Monographs, \textbf{11}. Cambridge Univ. Press, 2008.


\bibitem{Cuntz} J. Cuntz, \textit{The K-groups for free products of $C^*$-algebras}, Proc. of Symp. in Pure
Math. \textbf{38}, (1982) 81--84.

\bibitem{Dixmier} J. Dixmier, \textit{$C^*$-algebras and group representations}, North-Holland, 1977.

\bibitem{Kesten} H. Kesten, \textit{Symmetric random walks on
groups}, Trans. AMS \textbf{92} (1959), 336--354.

\bibitem{Wassermann} S. Wassermann, \textit{Exact $C^*$-algebras and related topics},
Lecture Notes Series \textbf{19}. Seoul National University, 1994.

%\bibitem{Eliasen} Rune Eliasen, \textit{$KK$-theory of amalgamated free products of $C^*$-algebras}, Math. Nachr. 280, No. 3, 271--280 (2007).

%\bibitem{Blackadar} B. Blackadar, \text{$K$-theory for operator algebras}, MSRI Publications, Vol. 5, 2nd ed., Cambridge University
%Press, Cambridge, 1998.

%\bibitem{Rordam} M. R{\o}dam, F. Larsen and N. J. Laustsen, \textit{An Introduction to K-Theory for
%$C^*$-Algebras}, LMSST 49, Cambridge, 2000.


\end{thebibliography}

\end{document}